\newcommand{\R}{\mathbb R}
\newcommand{\E}{\mathbb E}
\newtheorem{thm}{Theorem}[section]
\newtheorem{cor}{Corollary}[section]
\newtheorem{lemma}{Lemma}[section]
\theoremstyle{remark}
\begin{document}


\title{The variance conjecture on projections of the cube}

\author[D.\,Alonso]{David Alonso-Guti\'errez}
\address{\'Area de an\'alisis matem\'atico, Departamento de matem\'aticas, Facultad de Ciencias, Universidad de Zaragoza, Pedro cerbuna 12, 50009 Zaragoza (Spain), IUMA}
\email[(David Alonso)]{alonsod@unizar.es}


\author[J.\,Bernu\'es]{Julio Bernu\'es}
\address{\'Area de an\'alisis matem\'atico, Departamento de matem\'aticas, Facultad de Ciencias, Universidad de Zaragoza, Pedro cerbuna 12, 50009 Zaragoza (Spain), IUMA}
\email[(Julio Bernu\'es)]{bernues@unizar.es}
\subjclass[2010]{Primary 52B09, Secondary 52A23}
\thanks{Partially supported by MINECO Spanish grant MTM2016-77710-P and DGA grant E-64}

\begin{abstract}
We prove that the uniform probability measure $\mu$ on every $(n-k)$-dimensional projection of the $n$-dimensional unit cube verifies the variance conjecture with an absolute constant $C$
$$
\textrm{Var}_\mu|x|^2\leq  C \sup_{\theta\in S^{n-1}}\E_\mu\langle x,\theta\rangle^2\E_\mu|x|^2,
$$
provided that $1\leq k\leq\sqrt n$. We also prove that if $1\leq k\leq n^{\frac{2}{3}}(\log n)^{-\frac{1}{3}}$, 
the conjecture is true for the family of uniform probabilities on its projections on random $(n-k)$-dimensional subspaces.
\end{abstract}

\keywords{Variance conjecture, log-concave measures, convex bodies}

\date{\today}
\maketitle
\section{Introduction and notation}

The (generalized) variance conjecture states that there exists an absolute constant $C$ such that for every centered log-concave probability $\mu$ on $\R^n$  (i.e. of the form $d\mu=e^{-v(x)}dx$ for some convex function $v:\R^n\to(-\infty,\infty]$)
$$
\textrm{Var}_\mu|x|^2\leq C \lambda_\mu^2\E_\mu|x|^2,
$$
where $\E_\mu$ and $\textrm{Var}_\mu$ denote the expectation and the variance with respect to $\mu$ and $\lambda_\mu$ is the largest eigenvalue of the covariance matrix, i.e.  $\lambda_\mu^2=\max_{\theta\in S^{n-1}}\E_\mu\langle x,\theta\rangle^2$ where $S^{n-1}$ denotes the unit Euclidean sphere in $\R^n$.

This conjecture was first considered in the context of the so called Central Limit Problem for isotropic convex bodies in \cite{BK} and it is a particular case of a more general statement, known as the Kannan, Lov\'asz, and Simonovits or KLS-conjecture, see \cite{KLS}, which conjectures the existence of an absolute constant $C$ such that for any centered log-concave probability in $\R^n$ and any locally Lipschitz
function $g:\R^n\to\R$ such that  $\textrm{Var}_\mu g(x)$ is finite
$$
\textrm{Var}_\mu\,g(x)\leq C\lambda_\mu^2\E_\mu\,|\nabla g(x)|^2.
$$

In recent years a number of families of measures have been proved to verify these conjectures (see \cite{AB2} for a recent review on the subject). For instance, the KLS-conjecture is known to be true for the Gaussian probability and
the uniform probability measures on
the $\ell_p^n$-balls, some revolution bodies, the simplex and, with an extra
$\log n$ factor, on unconditional
bodies and log-concave probabilities with many symmetries (see \cite{BaC},
\cite{BaW}, \cite{B}, \cite{H}, \cite{K}, \cite{LW}, \cite{S}).
The best general known result for the KLS-conjecture adds a factor $\sqrt{n}$ and is due to Lee and Vempala (see \cite{LV}).
Besides, the variance conjecture is known to be true for uniform probabilities on unconditional bodies and on hyperplane projections of the cross-polytope and the cube (see \cite{K}
and \cite{AB1}). The best general estimate for the variance conjecture is the one given by Lee and Vempala for the KLS-conjecture.

We would like to remark that, while in the case of the KLS-conjecture one can assume without loss of generality that $\mu$ is isotropic (since then every linear transformation of the measure verifies it) this is not the case when we restrict to the variance conjecture, as we are considering only the function $g(x)=|x|^2$.

Before stating our results let us introduce some more notation. Let
$$
B_\infty^n:=\{x\in\R^n\,:\,|x_i|\leq1,\,\forall \,1\leq i\leq n\}
$$
denote the $n$-dimensional unit cube and, for any $1\le k\le n$, let $G_{n,n-k}$ be the set of all $(n-k)$-dimensional subspaces of $\R^n$. For any $E\in G_{n,n-k}$ we will denote by $K:=P_EB_\infty^n $ the orthogonal projection of $B_\infty^n$ onto $E$ and by $\mu$ the uniform probability on $K$. As mentioned before, it was proved in \cite{AB1} that the family of uniform probabilities on any $(n-1)$-dimensional projection of $B_\infty^n$ verifies the variance conjecture.

In this paper we will prove the following
\begin{thm}\label{TheoremVariance}
There exists an absolute constant $C$ such that for any $1\leq k\leq \sqrt{n}$ and any $E\in G_{n,n-k}$, if $\mu$ denotes the uniform probability measure on $K=P_E B_\infty^n$, then
$$
\textrm{Var}_\mu|x|^2\leq C \lambda_\mu^2\E_\mu|x|^2.
$$
\end{thm}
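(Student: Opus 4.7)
\medskip

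\noindent\textbf{Proof plan.} Set $F=E^\perp$ and let $\nu$ denote the pushforward of the uniform probability on $B_\infty^n$ under $P_E$, so that $\nu$ is supported on $K$ with density
$$
g(x)=\frac{1}{2^n}\Vol_k\bigl((x+F)\cap B_\infty^n\bigr).
$$
By Pr\'ekopa--Leindler $g$ is log-concave and attains its maximum at the origin, with $g(0)\ge 1/2^{n-k}$ by Vaaler's theorem on cube sections. Since $\E_\nu h(x)=\E\,h(P_E y)$ for $y$ with i.i.d.\ uniform$[-1,1]$ coordinates, direct computation gives
$$
\textup{Cov}(\nu)=\tfrac{1}{3}P_E,\qquad \E_\nu|x|^2=\tfrac{n-k}{3},\qquad \textrm{Var}_\nu|x|^2\asymp n-k,
$$
the last via an elementary fourth-moment expansion of $\textrm{Var}(y^\top P_E y)$ using $\|P_E\|_{HS}^2=n-k$. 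Thus $\nu$ satisfies the variance conjecture with an absolute constant, and the task is to transfer this to the uniform measure $\mu=\chi_K/\Vol(K)$.

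The key tool is the coarea identity
$$
\int_K h(x)\,g(x)\,dx=\frac{1}{2^n}\int_{B_\infty^n} h(P_E y)\,dy,
$$
which rewrites $\E_\mu h=\tfrac{2^n}{\Vol(K)}\,\E[h(P_E y)/g(P_E y)]$, expressing moments of $\mu$ as weighted moments of cube coordinates. The weight $1/g$ blows up near $\partial K$, but log-concavity of $g$ combined with Vaaler's lower bound on $g(0)$ controls the volume of the sub-level sets $\{g\le\alpha g(0)\}$, and hence the weighted moments in terms of their unweighted analogues. This yields $\E_\mu|x|^2\asymp n-k$, and since $\lambda_\mu^2\ge \E_\mu|x|^2/(n-k)$ trivially, we obtain $\lambda_\mu^2\E_\mu|x|^2\gtrsim n-k\asymp n$ in the regime $k\le\sqrt n$.

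It remains to show $\textrm{Var}_\mu|x|^2\le Cn$. Expanding $\textrm{Var}_\mu|x|^2=\sum_{i,j}\textup{Cov}_\mu(x_i^2,x_j^2)$, the diagonal terms give $O(n)$ from the boundedness of coordinates on $K\subset B_\infty^n$. For the off-diagonal sum I would compare with the corresponding $\nu$-sum (which is $O(n-k)$ by the first step) and estimate the defect using the coarea/weight machinery. The defect terms involve moments of $P_F y$ and are organised by the geometric invariants $\textup{tr}(P_F)=k$ and $\|P_F\|_{HS}^2=k$; a careful count over all cross-pairs $(i,j)$ is expected to yield a correction of order $k^2$, which is dominated by $n$ precisely when $k\le\sqrt n$.

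The principal obstacle is the quantitative control of the boundary-shell contribution in the transfer step, specifically the integral of $|x|^4/g(x)$ over $\{g\le\alpha g(0)\}$. Log-concavity together with Vaaler's bound handles the bulk cleanly, but tracking the tail requires sharp information on the volume of level sets of $g$, which is tied to the geometry of cube sections of codimension at most $k$. Once such control is in place, the restriction $k\le\sqrt n$ emerges naturally from the $k^2$ off-diagonal correction being the dominant error term.
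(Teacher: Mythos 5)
Your computation of the moments of the pushforward measure $\nu$ (the image of the uniform probability on $B_\infty^n$ under $P_E$) is correct as far as it goes: $\textup{Cov}(\nu)=\tfrac13 P_E$, $\E_\nu|x|^2=\tfrac{n-k}{3}$, and the fourth-moment expansion of $\textrm{Var}(y^\top P_E y)$ does give $\textrm{Var}_\nu|x|^2\asymp n-k$. However, the proposal as written does not constitute a proof, for two reasons.

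First, a concrete error: you claim the diagonal covariance terms are $O(n)$ ``from the boundedness of coordinates on $K\subset B_\infty^n$,'' but $P_EB_\infty^n$ is \emph{not} contained in $B_\infty^n$. A coordinate of $P_E y$ equals $\langle y,P_Ee_i\rangle$, which for $\|y\|_\infty\le1$ is bounded only by $\|P_Ee_i\|_1$ and can exceed $1$ (already in $\R^3$ with $E=\{x_1+x_2+x_3=0\}$ one gets coordinate values up to $4/3$). This bound can be repaired via log-concavity of $\mu$ and $\max_i\E_\mu x_i^2\le\lambda_\mu^2$, but that is a different argument from the one you give.

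Second and more seriously, the transfer from $\nu$ to $\mu=\chi_K/\Vol(K)$, which is the crux of the whole plan, is left entirely open --- you acknowledge this yourself (``the principal obstacle is the quantitative control of the boundary-shell contribution''). The identity $\E_\mu h=\Vol(K)^{-1}\E_\nu[h/g]$ is correct but essentially tautological: for $h=|x|^4$ it merely restates $\E_\mu|x|^4$, and Vaaler's lower bound $g(0)\ge2^{-(n-k)}$ controls $1/g$ only up to an exponentially large factor, which is not directly usable. The assertion that the off-diagonal defect between $\mu$ and $\nu$ is of order $k^2$ --- precisely what would make $k\le\sqrt n$ emerge --- is stated without any supporting computation. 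The paper avoids this transfer problem altogether by invoking the decomposition from \cite{ABBW} that writes $\mu$ as a convex combination of uniform measures on projections of $(n-k)$-dimensional faces: each such projection is an affine image of $B_\infty^{n-k}$ and hence satisfies KLS, giving $\textrm{Var}_{P_E(F)}|x|^2\le Cn$ per face, while the between-face spread $\left|\E_{P_E(F)}|x|^2-\E_\mu|x|^2\right|\le\tfrac{4k}{3}$ contributes the $k^2$ term that imposes $k\le\sqrt n$. Your intuition about the source of the $k^2$ correction is on target, but the face decomposition is what actually delivers it; the pushforward/weight route as written is a sketch with its hard step missing.
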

We will also prove the following theorem, which shows that for $k$ in a larger range, the variance conjecture is true for the family of uniform probabilities on the projections of $B_\infty^n$ on a random $(n-k)$-dimensional subspace. For that matter, we denote by $\mu_{n,n-k}$ the Haar probability measure on $G_{n,n-k}$.
\begin{thm}\label{TheoremVarianceRandom}
There exist absolute constants $C,c_1,c_2$ such that for any $1\leq k\leq\frac{n^\frac{2}{3}}{(\log n)^\frac{1}{3}}$, if $\mu$ denotes the uniform probability measure on $K=P_E(B_\infty^n)$, the measure $\mu_{n,n-k}$ of the subspaces $E\in G_{n,n-k}$ for which
$$
\textrm{Var}_\mu|x|^2\leq C\lambda_\mu^2\E_\mu|x|^2
$$
is greater than $1-c_1e^{-c_2n^\frac{2}{3}(\log n)^\frac{2}{3}}$.
\end{thm}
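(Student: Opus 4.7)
The strategy is to adapt the argument of Theorem \ref{TheoremVariance} to the random setting, using concentration of measure on the Grassmannian $G_{n,n-k}$ to push the range of admissible $k$ past $\sqrt n$. I would first revisit the proof of Theorem \ref{TheoremVariance} to locate the geometric quantities of $K=P_EB_\infty^n$ whose deterministic control is responsible for the cutoff $k\le \sqrt n$. The candidates are $\E_\mu|x|^2$, $\lambda_\mu^2=\|\mathrm{Cov}(\mu)\|_{\mathrm{op}}$, and possibly a moment of the support function of $K$. The plan is to keep the same reduction but replace the worst-case bounds on these quantities by high-probability bounds over $E$.

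Next, by the rotational invariance of $\mu_{n,n-k}$, the averaged covariance
\[
\overline{\Sigma} \; := \; \int_{G_{n,n-k}}\mathrm{Cov}(\mu_E)\,d\mu_{n,n-k}(E)
\]
is an intrinsic scalar multiple of the relevant identity, so the typical value of $\lambda_\mu^2$ equals $\E_\mu|x|^2/(n-k)$, while $\E_\mu|x|^2$ is itself of order $n-k$ because $B_\infty^n$ is isotropic up to a constant. This identifies the target scales that the variance inequality must accommodate, and it matches the form of the bound in Theorem \ref{TheoremVariance}.

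I would then apply the Gaussian/Haar concentration inequality on $G_{n,n-k}$: any $L$-Lipschitz function $f:G_{n,n-k}\to\R$ satisfies
\[
\mu_{n,n-k}\bigl\{|f-\E f|>t\bigr\}\le C\exp(-cnt^2/L^2).
\]
Applied to $E\mapsto\lambda_\mu^2(E)$ and $E\mapsto\E_\mu|x|^2$, with deviation $t$ at the scale $n^{-1/6}(\log n)^{1/3}$, one gets $nt^2\sim n^{2/3}(\log n)^{2/3}$, exactly the claimed probability bound. Substituting the concentrated values back into the inequality produced by the method of Theorem \ref{TheoremVariance} would then complete the argument.

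The main obstacle is controlling the Lipschitz constant of $E\mapsto\lambda_\mu^2(E)$. This is a supremum over $\theta\in S^{n-1}\cap E$ of an integral over a domain $K$ that itself depends on $E$, so neither the parametrization nor the support is stable under perturbations of $E$. The natural way around this is a net argument on the sphere combined with a pointwise Lipschitz estimate for $E\mapsto \E_{\mu_E}\langle x,\theta\rangle^2$ for a smoothly chosen $\theta=\theta(E)\in E$; a union bound over an $\varepsilon$-net introduces the $\log n$ factor. The specific range $k\le n^{2/3}(\log n)^{-1/3}$ is, I expect, exactly where the deterministic estimate inherited from Theorem \ref{TheoremVariance} and the concentration gain just balance, so the most delicate part of the proof is tracking this trade-off through the argument.
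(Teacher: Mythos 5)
Your high-level instinct is right — the extension from $k\le\sqrt n$ to $k\le n^{2/3}(\log n)^{-1/3}$ is obtained by applying the Gromov–Milman concentration inequality on $G_{n,n-k}$, and your back-of-the-envelope deviation scale giving $nt^2\sim n^{2/3}(\log n)^{2/3}$ is correct. But the specific functions you propose to concentrate are not the ones that control the problem, and this is a genuine gap rather than a cosmetic difference.

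Recall the decomposition \eqref{VarTwoTerms}: the variance splits into a ``within-face'' part and a ``between-faces'' part. The first is bounded deterministically by $Cn$ whenever $k\le n/3$ (Lemma~\ref{BoundFirstterm}), and the denominator already satisfies $\lambda_\mu^2\,\E_\mu|x|^2\gtrsim n$ deterministically (Lemma~\ref{EstimatesE|x|^2n-kFace}). The only place the restriction $k\le\sqrt n$ enters is the between-faces term
$\sum_i\frac{|P_E(F_i)|}{|K|}\bigl(\E_{P_E(F_i)}|x|^2-\E_\mu|x|^2\bigr)^2$,
whose worst-case bound is of order $k^2$. Concentrating $E\mapsto\lambda_\mu^2$ or $E\mapsto\E_\mu|x|^2$ cannot improve this: those quantities appear only in the lower bound for the denominator, which is already tight up to constants. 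What the paper actually concentrates is the \emph{face-conditional mean} $f_F(E)=\E_F|P_Ex|^2$ for each fixed $(n-k)$-face $F$. By the symmetries of the cube, its Grassmannian mean $\frac{(n-k)(n+2k)}{3n}$ is the same for all $F$, and its Lipschitz constant is of order $k$ (not $n$), so each $f_F$ concentrates in a window of width $\sqrt n$ with probability $1-e^{-cn^2/k^2}$. A union bound over the $2^k\binom{n}{k}$ faces then produces the factor $k\log n$ in the exponent, and balancing $n^2/k^2$ against $k\log n$ is exactly where $k\le n^{2/3}(\log n)^{-1/3}$ comes from. Finally, since $\E_\mu|x|^2$ is a convex combination of the $\E_{P_E(F_i)}|x|^2$, all deviations $|\E_{P_E(F)}|x|^2-\E_\mu|x|^2|$ are $O(\sqrt n)$ on this event, which gives the between-faces term $\lesssim n$ and closes the argument.

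Your proposal also runs directly into the obstacle you yourself flag — the ill-behaved Lipschitz constant of $E\mapsto\lambda_\mu^2(E)$, a supremum over a varying sphere $S^{n-1}\cap E$ of an integral over a varying body. The paper never needs to confront this: it uses only the crude deterministic lower bound on $\lambda_\mu^2$ and places all the probabilistic work on the simple, explicit functions $f_F$, whose Lipschitz estimates reduce to comparing orthogonal projections of a few coordinate vectors. You should redirect your concentration argument to these face-conditional expectations and use the union bound over faces; without that, the key step is missing.
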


The main tool to prove both theorems will be to decompose an integral on $K$ as the sum of the integrals on the projections of some $(n-k)$-dimensional faces. It was proved in \cite{ABBW} that for any $E\in G_{n,n-k}$ there exist $F_1,\dots, F_l$ a set of $(n-k)$-dimensional faces of $B_\infty^n$ such that for any integrable function on $K$
\begin{eqnarray}\label{CauchyLowerDimension}
\E_\mu f&=&\frac{1}{|K|}\int_Kf(x)dx=\sum_{i=1}^l\frac{|P_E(F_i)|}{|K|}\E_{P_E(F_i)}f(x)\cr
&=&\sum_{i=1}^l\frac{|P_E(F_i)|}{|K|}\frac{1}{|P_E(F_i)|}\int_{P_E(F_i)}f(x)dx\cr
&=&\sum_{i=1}^l\frac{|P_E(F_i)|}{|K|}\frac{1}{|F_i|}\int_{F_i}f(P_Ex)dx\cr
&=&\sum_{i=1}^l\frac{|P_E(F_i)|}{|K|}\E_{F_i}f(P_Ex),
\end{eqnarray}
where we have denoted by $|\cdot|$ the relative volume of a convex body to the affine subspace in which it lies, $\E_{F_i}$ and by $\E_{P_E(F_i)}$ the expectation with respect to the uniform probability on the face $F_i$ and on its projection $P_E(F_i)$. In particular
$$
\sum_{i=1}^l\frac{|P_E(F_i)|}{|K|}=1.
$$
Notice that the $(n-k)$-dimensional faces of $B_\infty^n$ are the sets of the form
$$
F_{(i_1,\varepsilon_1, \dots, i_k, \varepsilon_k)}=\{x\in B_\infty^n\,:\, x_{i_j}=\varepsilon_j,j=1,\dots, k\},
$$
where $1\leq i_j\leq n$, $i_{j_1}\neq i_{j_2}$ and $\varepsilon_j=\pm1$.\medskip

For any $E\in G_{n,n-k}$ we write $S_E=S^{n-1}\cap E$ and denote by $\sigma_E$ the Haar probability measure on $S_E$.

\section{The variance conjecture on $(n-k)$-dimensional projections of the cube}

In this section we shall prove Theorem \ref{TheoremVariance}. 
We start with the following lemma, which can be proved by direct computation:

\begin{lemma}
Let $E\in G_{n,n-k}$, $\mu$ the uniform probability on $K=P_E(B_\infty^n)$ and $\{F_i\}_{i=1}^l$ the set of $(n-k)$-dimensional faces described in \eqref{CauchyLowerDimension}. Then
\begin{equation}\label{VarTwoTerms}
\textrm{Var}_\mu|x|^2=\sum_{i=1}^l\frac{|P_E(F_i)|}{|K|}\textrm{Var}_{P_E(F_i)}|x|^2+\sum_{i=1}^l\frac{|P_E(F_i)|}{|K|}\left(\E_{P_E(F_i)}|x|^2-\E_\mu|x|^2\right)^2.
\end{equation}

\end{lemma}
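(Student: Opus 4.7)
The statement is the classical law of total variance (also known as the ANOVA or conditional variance decomposition) applied to the mixture representation of $\mu$ provided by the Cauchy-type formula \eqref{CauchyLowerDimension}. The plan is to derive it by direct expansion, using \eqref{CauchyLowerDimension} as the only non-trivial input.

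Set $p_i := |P_E(F_i)|/|K|$, so that $\sum_{i=1}^l p_i = 1$, and write $\nu_i$ for the uniform probability on $P_E(F_i)$. Applying \eqref{CauchyLowerDimension} first to $f(x) = |x|^2$ and then to $f(x) = |x|^4$ yields the two identities
\begin{equation*}
\E_\mu |x|^2 = \sum_{i=1}^l p_i \E_{\nu_i} |x|^2,
\qquad
\E_\mu |x|^4 = \sum_{i=1}^l p_i \E_{\nu_i} |x|^4.
\end{equation*}
These are the only facts about the geometry of $K$ and the faces $F_i$ that are needed.

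Next I would start from $\textrm{Var}_\mu |x|^2 = \E_\mu |x|^4 - (\E_\mu|x|^2)^2$, substitute the second identity above, and replace each $\E_{\nu_i}|x|^4$ by $\textrm{Var}_{\nu_i}|x|^2 + (\E_{\nu_i}|x|^2)^2$. This gives
\begin{equation*}
\textrm{Var}_\mu|x|^2 = \sum_{i=1}^l p_i \,\textrm{Var}_{\nu_i}|x|^2 \;+\; \sum_{i=1}^l p_i (\E_{\nu_i}|x|^2)^2 \;-\; (\E_\mu|x|^2)^2.
\end{equation*}
It remains to recognize that the last two terms equal $\sum_i p_i(\E_{\nu_i}|x|^2 - \E_\mu|x|^2)^2$: expanding the square, using $\sum_i p_i = 1$, and applying the first identity above to collapse the cross term $2\E_\mu|x|^2 \sum_i p_i \E_{\nu_i}|x|^2 = 2(\E_\mu|x|^2)^2$ does the job.

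There is no real obstacle here; the entire content is the bookkeeping identity $\mathrm{Var}(X) = \E[\mathrm{Var}(X\mid Y)] + \mathrm{Var}(\E[X\mid Y])$, with the role of the conditioning sigma-algebra played by the index $i$ selecting which projected face one averages over. The only point worth flagging is that although the projected faces $P_E(F_i)$ may overlap in $K$, this does not affect the argument: \eqref{CauchyLowerDimension} asserts the equality of integrals, which is all the expansion above uses.
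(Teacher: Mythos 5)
Your proof is correct and is precisely the ``direct computation'' the paper alludes to: it is the law of total variance applied to the mixture representation $\E_\mu f = \sum_i p_i \E_{\nu_i} f$ furnished by \eqref{CauchyLowerDimension}, with the cross term collapsing via $\sum_i p_i = 1$ and $\sum_i p_i \E_{\nu_i}|x|^2 = \E_\mu|x|^2$. The remark about possible overlap of the $P_E(F_i)$ is a sensible precaution, though in fact the decomposition from \cite{ABBW} produces projected faces with pairwise disjoint interiors; in any case, as you note, your argument only uses the integral identity and not any partition structure.
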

We will estimate the two summands appearing in  \eqref{VarTwoTerms}. The following lemma provides upper and lower bounds to some of parameters involved.

\begin{lemma}\label{EstimatesE|x|^2n-kFace}
Let $E\in G_{n,n-k}$. Then, for any $\theta\in S_E$ and  any $(n-k)$-dimensional face $F=F_{(i_1,\varepsilon_{1},\dots,i_k,\varepsilon_k)}$ of $B_\infty^n$  we have,
$$
\E_{P_E(F)}\langle x,\theta\rangle^2=\E_F\langle P_E x,\theta\rangle^2=\frac{1}{3}+\left(\sum_{j=1}^k\varepsilon_j\theta_{i_j}\right)^2-\frac{1}{3}\sum_{j=1}^k\theta_{i_j}^2
$$
and
$$
\E_{P_E(F)}|x|^2=\E_F|P_Ex|^2=\frac{n-k}{3}+\left|P_E\left(\sum_{j=1}^k\varepsilon_je_{i_j}\right)\right|^2-\frac{1}{3}\sum_{j=1}^k|P_E (e_{i_j})|^2.
$$
Consequently,
$$
\frac{n-2k}{3}\leq\E_{P_E(F)}|x|^2\leq\frac{n+2k}{3}, \hspace{1cm}
\frac{n-2k}{3}\leq\E_\mu|x|^2\leq\frac{n+2k}{3},
$$
and
$$
\lambda_\mu^2\geq \frac{n-2k}{3(n-k)}.
$$
\end{lemma}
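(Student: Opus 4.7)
The plan is to compute both expectations via the natural parametrization
$$F = \Bigl\{\sum_{j=1}^{k}\varepsilon_j e_{i_j} + \sum_{i\notin\{i_1,\dots,i_k\}} t_i\, e_i : t_i\in[-1,1]\Bigr\},$$
under which the uniform probability on $F$ makes the free coordinates $\{t_i\}_{i\notin\{i_j\}}$ independent and uniform on $[-1,1]$, so that $\E\, t_i=0$ and $\E\, t_i^2=\tfrac13$. Since $\theta\in E$ we have $\langle P_Ex,\theta\rangle=\langle x,\theta\rangle$, which immediately gives the first equality in each display. For the first formula I would split $\langle x,\theta\rangle=a+Y$, where $a=\sum_j\varepsilon_j\theta_{i_j}$ is constant on $F$ and $Y=\sum_{i\notin\{i_j\}}t_i\theta_i$ is centered; the cross term then vanishes, $\E_F Y^2=\tfrac13\sum_{i\notin\{i_j\}}\theta_i^2=\tfrac13\bigl(1-\sum_j\theta_{i_j}^2\bigr)$ using $|\theta|=1$, and adding $a^2$ produces the claimed expression.

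For the second formula, fix any orthonormal basis $\{f_\alpha\}_{\alpha=1}^{n-k}$ of $E$, so that $|P_Ex|^2=\sum_\alpha\langle x,f_\alpha\rangle^2$. Applying the first identity with $\theta=f_\alpha$ and summing over $\alpha$, the resulting contributions $\sum_\alpha\bigl(\sum_j\varepsilon_j(f_\alpha)_{i_j}\bigr)^2$ and $\sum_j\sum_\alpha(f_\alpha)_{i_j}^2$ reassemble, via $\sum_\alpha\langle v,f_\alpha\rangle^2=|P_Ev|^2$, into $|P_E(\sum_j\varepsilon_je_{i_j})|^2$ and $\sum_j|P_E e_{i_j}|^2$ respectively, producing the desired identity.

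The bounds on $\E_{P_E(F)}|x|^2$ follow from $0\leq|P_Ev|\leq|v|$: the middle term lies in $[0,k]$ (bounded by $|\sum_j\varepsilon_je_{i_j}|^2=k$) and the last in $[0,k/3]$, so $\tfrac{n-2k}{3}\leq\E_F|P_Ex|^2\leq\tfrac{n+2k}{3}$. The matching bounds for $\E_\mu|x|^2$ are then automatic because \eqref{CauchyLowerDimension} exhibits it as a convex combination of the $\E_{F_i}|P_Ex|^2$. For the lower bound on $\lambda_\mu^2$, note that $K\subset E$ forces $\lambda_\mu^2=\max_{\theta\in S_E}\E_\mu\langle x,\theta\rangle^2$, and this maximum dominates the average of $\E_\mu\langle x,f_\alpha\rangle^2$ over any orthonormal basis of $E$, which equals $\tfrac{1}{n-k}\E_\mu|x|^2\geq\tfrac{n-2k}{3(n-k)}$. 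There is no substantive obstacle here; the only point requiring a moment's care is the reduction $\theta\in S^{n-1}\rightsquigarrow\theta\in S_E$ in the definition of $\lambda_\mu^2$, which is legitimate because $\mu$ is supported on $E$.
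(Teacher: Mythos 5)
Your proof is correct and follows essentially the same route as the paper: compute the one-dimensional marginal $\E_F\langle x,\theta\rangle^2$ directly, then recover $\E_F|P_Ex|^2$ by averaging over directions in $E$, and bound everything via $0\le|P_Ev|\le|v|$. The only cosmetic difference is that you sum the first identity over a finite orthonormal basis of $E$ (using $\sum_\alpha\langle v,f_\alpha\rangle^2=|P_Ev|^2$), whereas the paper integrates it over $S_E$ against $\sigma_E$ and invokes $\int_{S_E}\langle y,\theta\rangle^2\,d\sigma_E(\theta)=|y|^2/(n-k)$; both are the same averaging step in different clothing, and your explicit remark that $\lambda_\mu^2$ can be computed over $S_E$ because $\mu$ is supported on $E$ is a correct (if not strictly necessary, since only a lower bound is needed) observation.
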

\begin{proof}
For every $\theta\in S_E$, straightforward computations yield 
\begin{eqnarray*}
\E_F\langle P_E x,\theta\rangle^2&=&\E_F\langle x,\theta\rangle^2\cr
&=&\frac{1}{|B_\infty^{n-k}|}\int_{B_\infty^{n-k}}\left(\sum_{j=1}^k\varepsilon_j\theta_{i_j}+\sum_{j\not\in\{i_1,\dots,i_k\}}x_j\theta_j\right)^2dx\cr
&=&\left(\sum_{j=1}^k\varepsilon_j\theta_{i_j}\right)^2+\frac{1}{3}\left(\sum_{j\not\in\{i_1,\dots,i_k\}}\theta_j^2\right)\cr
&=&\left(\sum_{j=1}^k\varepsilon_j\theta_{i_j}\right)^2+\frac{1}{3}\left(1-\sum_{j=1}^k\theta_{i_j}^2\right).\cr
\end{eqnarray*}
This proves the first identity. Now, by integrating on $\theta\in S_E$ with respect to the uniform probability mealonsodsure and using Fubini's theorem, we obtain
\begin{eqnarray*}
\frac{1}{n-k}\E_F|P_Ex|^2dx&=&\frac{1}{3}+\frac{1}{n-k}\left|P_E\left(\sum_{j=1}^k\varepsilon_je_{i_j}\right)\right|^2-\frac{1}{3(n-k)}\sum_{j=1}^k|P_E (e_{i_j})|^2,
\end{eqnarray*}
which proves the second identity.

The bounds
$$
0\leq\left|P_E\left(\sum_{j=1}^k\varepsilon_je_{i_j}\right)\right|^2\leq\left|\sum_{j=1}^k\varepsilon_je_{i_j}\right|^2=k
$$
and
$$
0\leq |P_E (e_{i_j})|^2\leq|e_{i_j}|^2=1,
$$
prove the upper and lower bound for $\E_F|P_Ex|^2$ and by using formula \eqref{CauchyLowerDimension} we deduce the estimates for $\E_\mu|x|^2$. Finally, notice that
$$
\lambda_\mu^2\geq\int_{S_E}\E_\mu\langle x,\theta\rangle^2\ d\sigma_E(\theta)=\frac{1}{n-k}\E_\mu|x|^2\geq\frac{n-2k}{3(n-k)},
$$
which proves the last inequality.
\end{proof}

We now focus on the first summand in \eqref{VarTwoTerms}. We take into account the fact that for any $(n-k)$-dimensional face  $F=F_{(i_1,\varepsilon_1, \dots, i_k, \varepsilon_k)}$ we can write $$P_E(F)=a_F+T_F(B_\infty^{n-k})$$ where $a_F=P_E\left(\sum_{j=1}^k\varepsilon_j e_{i_j}\right)$ and $T_F\,:\,\R^{n-k}\to E$ is a linear map.

The effect of the translation map in our problem is the content of the next

\begin{lemma}
Let $\nu$ be a symmetric measure in $\R^n$,  $a\in\R^n$ and $\nu_a$ the translate measure $\nu_a(A):=\nu(A-a)$ (or equivalently, $\int_{\R^n}f(x)\ d\nu_a(x)=\int_{\R^n}f(x+a)\ d\nu(x)$  for any non negative (measurable) function $f$). Then,
$$\textrm{Var}_{\nu_a}|x|^2=\textrm{Var}_{\nu}|x|^2+4\E_\nu\langle a,x\rangle^2.$$

\end{lemma}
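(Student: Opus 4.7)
The plan is a direct computation, pushing everything through the change of variables $\int f\,d\nu_a=\int f(x+a)\,d\nu$ and exploiting the symmetry of $\nu$ (which makes all integrals of odd polynomial functions of $x$ vanish).

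First I would compute the first moment. Since $|x+a|^2=|x|^2+2\langle a,x\rangle+|a|^2$ and $\int \langle a,x\rangle\,d\nu=0$ by symmetry,
$$
\E_{\nu_a}|x|^2=\E_\nu|x+a|^2=\E_\nu|x|^2+|a|^2.
$$
Next I would compute the second moment, expanding
$$
|x+a|^4=\bigl(|x|^2+2\langle a,x\rangle+|a|^2\bigr)^2
= |x|^4+4\langle a,x\rangle^2+|a|^4+4|x|^2\langle a,x\rangle+2|a|^2|x|^2+4|a|^2\langle a,x\rangle.
$$
The two terms with an odd power of $x$ (namely $4|x|^2\langle a,x\rangle$ and $4|a|^2\langle a,x\rangle$) integrate to zero against the symmetric measure $\nu$, so
$$
\E_{\nu_a}|x|^4=\E_\nu|x|^4+4\,\E_\nu\langle a,x\rangle^2+|a|^4+2|a|^2\E_\nu|x|^2.
$$

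Finally I would subtract the square of the first moment:
$$
\bigl(\E_{\nu_a}|x|^2\bigr)^2=\bigl(\E_\nu|x|^2\bigr)^2+2|a|^2\E_\nu|x|^2+|a|^4,
$$
and the terms $2|a|^2\E_\nu|x|^2$ and $|a|^4$ cancel, leaving
$$
\textrm{Var}_{\nu_a}|x|^2=\E_\nu|x|^4-\bigl(\E_\nu|x|^2\bigr)^2+4\,\E_\nu\langle a,x\rangle^2=\textrm{Var}_\nu|x|^2+4\,\E_\nu\langle a,x\rangle^2,
$$
as claimed. There is no real obstacle here; the only thing to be careful about is identifying precisely which cross terms die because of symmetry, and this is exactly why the hypothesis that $\nu$ is symmetric appears in the statement.
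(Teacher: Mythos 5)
Your proof is correct and is essentially the same direct computation the paper gives: expand $|x+a|^2$ and $|x+a|^4$, take expectations, use the symmetry of $\nu$ to annihilate the odd-degree terms, and subtract. No substantive difference from the paper's argument.
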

\begin{proof}
$$|x+a|^2=|x|^2+2\langle a,x\rangle+|a|^2$$ and
$$|x+a|^4=|x|^4+4|x|^2\langle a,x\rangle+4|a|^2\langle a,x\rangle+2|x|^2|a|^2+4\langle a,x\rangle^2+|a|^4$$

Taking expectations and using symmetry we have,

$$\textrm{Var}_{\nu_a}|x|^2=\E_{\nu_a}|x|^4-\big(\E_{\nu_a}|x|^2\big)^2=
\E_{\nu}|x+a|^4-\big(\E_{\nu}|x+a|^2\big)^2=$$
$$=\E_{\nu}|x+a|^4-\big(\E_{\nu}|x|^2+|a|^2\big)^2=
\textrm{Var}_{\nu}|x|^2+4\E_\nu\langle a,x\rangle^2$$
\end{proof}

Taking in the previous lemma $\nu$ as the uniform probability measure on $T_F(B_\infty^{n-k})$ we have

\begin{cor}\label{VarianceTraslation}
Let $F=F_{(i_1,\varepsilon_1, \dots, i_k, \varepsilon_k)}$ be an $(n-k)$-dimensional face of $B_\infty^n$ and let $P_E(F)=a_F+T_F(B_\infty^{n-k})$ as above. Then
$$\textrm{Var}_{P_E(F)}|x|^2=\textrm{Var}_{T_F(B_\infty^{n-k})}|x|^2+4\E_{T_F(B_\infty^{n-k})}\langle a_F,x\rangle^2$$
\end{cor}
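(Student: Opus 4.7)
The corollary is essentially a direct application of the preceding lemma, so the plan is short.

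The plan is to apply the preceding lemma with $a = a_F$ and with $\nu$ equal to the uniform probability measure on the set $T_F(B_\infty^{n-k})$. For this to be legitimate one needs two observations. First, $T_F(B_\infty^{n-k})$ is a centrally symmetric convex body, because $B_\infty^{n-k}$ is symmetric about the origin and $T_F$ is linear. Hence the uniform probability measure $\nu$ on $T_F(B_\infty^{n-k})$ is a symmetric measure on $E\subset\R^n$, which is what the hypothesis of the previous lemma requires.

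Second, one checks that the translate $\nu_{a_F}$, defined by $\nu_{a_F}(A)=\nu(A-a_F)$, is exactly the uniform probability measure on $P_E(F)=a_F+T_F(B_\infty^{n-k})$; this is just the change of variables $y=x+a_F$ applied to the uniform density on $T_F(B_\infty^{n-k})$, using that translations preserve Lebesgue measure (restricted to the affine subspace containing $P_E(F)$).

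With these two observations in hand, the previous lemma yields
\begin{equation*}
\textrm{Var}_{P_E(F)}|x|^2=\textrm{Var}_{\nu_{a_F}}|x|^2=\textrm{Var}_\nu|x|^2+4\E_\nu\langle a_F,x\rangle^2=\textrm{Var}_{T_F(B_\infty^{n-k})}|x|^2+4\E_{T_F(B_\infty^{n-k})}\langle a_F,x\rangle^2,
\end{equation*}
which is exactly the claimed identity. There is no real obstacle here; the only point to be careful about is that all measures are viewed on the affine subspace actually containing the sets (so that ``uniform probability'' and ``Lebesgue measure'' are unambiguous), but this is already the convention fixed in the paragraph following \eqref{CauchyLowerDimension}.
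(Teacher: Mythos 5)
Your proof is correct and takes essentially the same approach as the paper: the paper likewise obtains the corollary by applying the preceding lemma with $\nu$ the uniform probability on $T_F(B_\infty^{n-k})$ and $a=a_F$, and you have just spelled out the (routine) verifications that $\nu$ is symmetric and that $\nu_{a_F}$ is the uniform probability on $P_E(F)$.
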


\begin{lemma}\label{EstimatesLinearTransformations}
Let $E\in G_{n,n-k}$. Then, for any $\theta\in S_E$ and  any $(n-k)$-dimensional face of $B_\infty^n$, $F=F_{(i_1,\varepsilon_{1},\dots,i_k,\varepsilon_k)}$, if  $P_E(F)=a_F+T_F(B_\infty^{n-k})$ as above, we have
\begin{eqnarray*}
\E_{T_F(B_\infty^{n-k})}\langle x,\theta\rangle^2=\frac{1}{3}-\frac{1}{3}\sum_{j=1}^k\theta_{i_j}^2\quad \big(\leq\frac{1}{3}\big)
\end{eqnarray*}
and
$$
\E_{T_F(B_\infty^{n-k})}|x|^2=\frac{n-k}{3}-\frac{1}{3}\sum_{j=1}^k|P_E(e_{i_j})|^2\quad\big(\leq\frac{n-k}{3}\big)
$$
\end{lemma}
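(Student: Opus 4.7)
The plan is to deduce this lemma from Lemma \ref{EstimatesE|x|^2n-kFace} simply by undoing the translation by $a_F$, exploiting the central symmetry of $T_F(B_\infty^{n-k})$.

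First I would observe that since $B_\infty^{n-k}$ is symmetric, so is its linear image $T_F(B_\infty^{n-k})$, hence the uniform measure $\nu$ on it is centred. Expanding
$$
\langle x+a_F,\theta\rangle^2=\langle x,\theta\rangle^2+2\langle a_F,\theta\rangle\langle x,\theta\rangle+\langle a_F,\theta\rangle^2
$$
and integrating against $\nu$, the middle term vanishes. Using the translation identity $\E_{P_E(F)}\langle x,\theta\rangle^2=\E_\nu\langle x+a_F,\theta\rangle^2$ (since $P_E(F)=a_F+T_F(B_\infty^{n-k})$), this yields
$$
\E_{T_F(B_\infty^{n-k})}\langle x,\theta\rangle^2=\E_{P_E(F)}\langle x,\theta\rangle^2-\langle a_F,\theta\rangle^2.
$$

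Next I would compute $\langle a_F,\theta\rangle$. Since $\theta\in E$, we have $\langle P_E v,\theta\rangle=\langle v,\theta\rangle$ for every $v\in\R^n$, so
$$
\langle a_F,\theta\rangle=\Bigl\langle P_E\Bigl(\sum_{j=1}^k\varepsilon_j e_{i_j}\Bigr),\theta\Bigr\rangle=\sum_{j=1}^k\varepsilon_j\theta_{i_j}.
$$
Plugging this together with the formula for $\E_{P_E(F)}\langle x,\theta\rangle^2$ from Lemma \ref{EstimatesE|x|^2n-kFace}, the term $\bigl(\sum_{j=1}^k\varepsilon_j\theta_{i_j}\bigr)^2$ cancels and the first identity follows.

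For the second identity, I would integrate the first over $\theta\in S_E$ against $\sigma_E$ and apply Fubini. For any $x\in E$ one has $\int_{S_E}\langle x,\theta\rangle^2\,d\sigma_E(\theta)=|x|^2/(n-k)$; applied to $x\in T_F(B_\infty^{n-k})\subset E$ this gives
$$
\E_{T_F(B_\infty^{n-k})}|x|^2=(n-k)\int_{S_E}\E_{T_F(B_\infty^{n-k})}\langle x,\theta\rangle^2\,d\sigma_E(\theta).
$$
Using the same spherical identity applied to $x=e_{i_j}$, namely $\int_{S_E}\theta_{i_j}^2\,d\sigma_E(\theta)=|P_E e_{i_j}|^2/(n-k)$, the first identity integrates to exactly $\frac{n-k}{3}-\frac{1}{3}\sum_{j=1}^k|P_E(e_{i_j})|^2$.

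There is no real obstacle here: the whole argument is bookkeeping around the symmetry of the cube and the identity $\langle P_E v,\theta\rangle=\langle v,\theta\rangle$ for $\theta\in E$. The parenthetical upper bounds $\le \frac{1}{3}$ and $\le \frac{n-k}{3}$ are immediate since each term subtracted is non-negative.
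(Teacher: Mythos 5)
Your argument is correct and matches the paper's own proof in essence: both undo the translation by $a_F$ using the central symmetry of $T_F(B_\infty^{n-k})$, cancel the $\bigl(\sum_j\varepsilon_j\theta_{i_j}\bigr)^2$ term against the formula in Lemma \ref{EstimatesE|x|^2n-kFace}, and then integrate over $\theta\in S_E$ with Fubini to obtain the second identity.
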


\begin{proof}
Notice that
\begin{eqnarray*}
\E_{P_E(F)}\langle x, \theta\rangle^2&=&\E_{T_F(B_\infty^n)}\langle a_F+x,\theta\rangle^2=\langle a_F,\theta\rangle^2+\E_{T_F(B_\infty^n)}\langle x,\theta\rangle^2\cr
&=&\left(\sum_{j=1}^k\varepsilon_j\theta_{i_j}\right)^2+\E_{T_F(B_\infty^n)}\langle x,\theta\rangle^2.\cr
\end{eqnarray*}

On the other hand, by Lemma \ref{EstimatesE|x|^2n-kFace}

\begin{eqnarray*}
\E_{P_E(F)}\langle x, \theta\rangle^2=\E_F\langle P_E x,\theta\rangle^2=\frac{1}{3}+\left(\sum_{j=1}^k\varepsilon_j\theta_{i_j}\right)^2-\frac{1}{3}\sum_{j=1}^k\theta_{i_j}^2,
\end{eqnarray*}
and we obtain the result. By integrating in $\theta\in S_E$ with respect to the uniform measure and using Fubini's theorem we obtain the second identity.
\end{proof}

As a consequence we have the following lemma, which gives an upper bound for the first term in \eqref{VarTwoTerms} of the right order for the variance conjecture to be true as long as $k\leq \frac{n}{3}$.

\begin{lemma}\label{BoundFirstterm}
Let $E\in G_{n,n-k}$. Then, for  any $(n-k)$-dimensional face $F$ of $B_\infty^n$ we have,
$$
\textrm{Var}_{P_E(F)}|x|^2\leq Cn.
$$
Consequently, there exists an absolute constant $C$ such that if $k\leq\frac{n}{3}$ and $\{F_i\}_{i=1}^l$ is the set of $(n-k)$-dimensional faces described in \eqref{CauchyLowerDimension} then
$$
\sum_{i=1}^l\frac{|P_E(F_i)|}{|K|}\textrm{Var}_{P_E(F_i)}|x|^2\leq C\lambda_\mu^2\E_\mu|x|^2.
$$
\end{lemma}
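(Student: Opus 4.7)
The plan is to invoke Corollary \ref{VarianceTraslation} and control the two resulting terms separately, then combine with the lower bounds in Lemma \ref{EstimatesE|x|^2n-kFace}. Write
$$
\textrm{Var}_{P_E(F)}|x|^2=\textrm{Var}_{T_F(B_\infty^{n-k})}|x|^2+4\,\E_{T_F(B_\infty^{n-k})}\langle a_F,x\rangle^2.
$$
The cross term is the easy one: applying Lemma \ref{EstimatesLinearTransformations} with $\theta=a_F/|a_F|\in S_E$ gives $\E_{T_F(B_\infty^{n-k})}\langle a_F,x\rangle^2\leq |a_F|^2/3$, and from Lemma \ref{EstimatesE|x|^2n-kFace} we already know $|a_F|^2=|P_E(\sum_j\varepsilon_je_{i_j})|^2\leq k\leq n$. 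So this term contributes $O(n)$.

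The main obstacle is the first term, $\textrm{Var}_{T_F(B_\infty^{n-k})}|x|^2$. I would pull it back to the cube $B_\infty^{n-k}$ via $T_F$: if $y$ is uniform on $B_\infty^{n-k}$, then $|T_Fy|^2=y^\top M y$ with $M=T_F^\top T_F$ a symmetric PSD matrix whose entries are $M_{jl}=\langle P_E e_j,P_E e_l\rangle$ for $j,l\notin\{i_1,\dots,i_k\}$. Since the coordinates of $y$ are independent and symmetric on $[-1,1]$, the standard moment computation ($\E y_j^2=1/3$, $\E y_j^4=1/5$, odd moments zero) gives
$$
\textrm{Var}(y^\top M y)=\frac{4}{45}\sum_{j}M_{jj}^{2}+\frac{2}{9}\sum_{j\neq l}M_{jl}^{2}\leq\frac{2}{9}\,\mathrm{tr}(M^{2}).
$$
Now $\mathrm{tr}(M^2)=\sum_{j,l\notin I}\langle P_E e_j,P_E e_l\rangle^{2}\leq\sum_{j,l=1}^{n}\langle P_E e_j,P_E e_l\rangle^{2}=\|P_E\|_{HS}^{2}=\mathrm{tr}(P_E)=n-k$, because $P_E$ is an orthogonal projection. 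This yields $\textrm{Var}_{T_F(B_\infty^{n-k})}|x|^{2}\leq C(n-k)$, so altogether $\textrm{Var}_{P_E(F)}|x|^2\leq Cn$.

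For the consequence, the weights $|P_E(F_i)|/|K|$ sum to $1$ by \eqref{CauchyLowerDimension}, so $\sum_i\frac{|P_E(F_i)|}{|K|}\textrm{Var}_{P_E(F_i)}|x|^2\leq Cn$. Finally, under the hypothesis $k\leq n/3$ one has $n-2k\geq n/3$, and Lemma \ref{EstimatesE|x|^2n-kFace} gives
$$
\lambda_\mu^{2}\,\E_\mu|x|^{2}\geq\frac{n-2k}{3(n-k)}\cdot\frac{n-2k}{3}\geq c\,n
$$
for an absolute constant $c>0$. Combining the two bounds produces the stated estimate $\sum_i\frac{|P_E(F_i)|}{|K|}\textrm{Var}_{P_E(F_i)}|x|^2\leq C\lambda_\mu^{2}\E_\mu|x|^{2}$. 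The only genuinely delicate point is the fourth-moment bookkeeping leading to the Hilbert--Schmidt estimate for $M$; everything else is assembly of lemmas already in the excerpt.
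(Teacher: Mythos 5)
Your proof is correct, and it handles the cross term and the final assembly exactly as the paper does; the genuinely different step is how you bound $\textrm{Var}_{T_F(B_\infty^{n-k})}|x|^2$. The paper invokes the known fact that $B_\infty^{n-k}$ (hence every linear image of it) satisfies the variance conjecture with a dimension-free constant, and then uses Lemma~\ref{EstimatesLinearTransformations} to bound $\lambda^2_{T_F(B_\infty^{n-k})}\E_{T_F(B_\infty^{n-k})}|x|^2$ by $\tfrac13\cdot\tfrac{n-k}{3}$. You instead do an elementary fourth-moment computation of $\textrm{Var}(y^\top M y)$ for $y$ uniform on $B_\infty^{n-k}$, where $M=T_F^\top T_F$ has entries $\langle P_Ee_j,P_Ee_l\rangle$, and bound $\mathrm{tr}(M^2)$ by $\|P_E\|_{HS}^2=n-k$. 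Your moment bookkeeping checks out: the diagonal contributes $\tfrac{4}{45}\sum_j M_{jj}^2$, the off-diagonal $\tfrac{2}{9}\sum_{j\ne l}M_{jl}^2$, cross covariances vanish by odd symmetry, and $\tfrac{4}{45}\le\tfrac{2}{9}$ closes the estimate. The trade-off is clear: the paper's route is shorter but imports a nontrivial theorem (that the cube satisfies KLS/variance with a uniform constant), whereas yours is completely self-contained, uses only second and fourth moments of the uniform distribution on $[-1,1]$, and produces an explicit constant $\tfrac{2}{9}(n-k)$ rather than an unspecified $C$. Both are rigorous and give the same order $O(n)$.
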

\begin{proof}
By Corollary \ref{VarianceTraslation}, we have that for any such $F$
$$
\textrm{Var}_F|P_Ex|^2=\textrm{Var}_{T_F(B_\infty^{n-k})}|x|^2+4\E_{T_F(B_\infty^{n-k})}\langle a_F,x\rangle^2.
$$
Since $B_\infty^{n-k}$ verifies the Kannan-Lov\'asz-Simonovits conjecture, every linear transform of it verifies the variance conjecture and therefore there exists an absolute constant $C$ such that
$$
\textrm{Var}_{T_F(B_\infty^{n-k})}|x|^2\leq C\lambda_{T_F(B_\infty^{n-k})}^2\E_{T_F(B_\infty^{n-k})}|x|^2
$$
Since by Lemma \ref{EstimatesLinearTransformations} the two factors involved are bounded by $\frac13$ and $\frac{n-k}3$ respectively, we have $
\textrm{Var}_{T_F(B_\infty^{n-k})}|x|^2\le C(n-k)$.

On the other hand, by Lemma \ref{EstimatesLinearTransformations},

$$\E_{T_F(B_\infty^{n-k})}\langle a_F,x\rangle^2\leq \frac13|a_F|^2=\frac13\left|P_E\left(\sum_{j=1}^k\varepsilon_j e_{i_j}\right)\right|^2
\leq\frac13\left|\left(\sum_{j=1}^k\varepsilon_j e_{i_j}\right)\right|^2=\frac13k.$$

 Therefore, there exists an absolute constant $C$ such that
$$
\textrm{Var}_{P_E(F)}|x|^2\leq C(n-k+k)=Cn,
$$
which proves the first part of the Lemma.

For the second part,  notice that by Lemma \ref{EstimatesE|x|^2n-kFace} we have
$$
\lambda_\mu^2\E_\mu|x|^2\geq \frac{({n-2k})^2}{9(n-k)}\geq\frac{n}{54}
$$
when $1\leq k\leq\frac{n}{3}$ and now the second part of the Lemma easily follows.
\end{proof}

For the second summand of \eqref{VarTwoTerms} we invoke once again Lemma \ref{EstimatesE|x|^2n-kFace}. The estimates therein provide an upper bound of the right order for the variance conjecture to hold as long as $k\leq\sqrt{n}$.

\begin{lemma}\label{BoundSecondTermVariance}
Let $E\in G_{n,n-k}$ and let $\mu$ be the uniform probability on $K=P_E(B_\infty^n)$. Then for any $(n-k)$-dimensional face $F$ of $B_\infty^n$ we have,
$$
\left|\E_{F}|P_Ex|^2-\E_\mu|x|^2\right|\leq \frac{4k}{3}.
$$
Consequently, there exists an absolute constant $C$ such that if $k\leq\sqrt n$ and $\{F_i\}_{i=1}^l$ is the set of $(n-k)$-dimensional faces described in \eqref{CauchyLowerDimension},
$$
\sum_{i=1}^l\frac{|P_E(F_i)|}{|K|}\left(\E_{F_i}|P_Ex|^2-\E_\mu|x|^2\right)^2\leq C\lambda_\mu^2\E_\mu|x|^2.
$$
\end{lemma}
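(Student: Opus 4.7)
The plan is to extract both claims from Lemma \ref{EstimatesE|x|^2n-kFace} without any fresh computation. That lemma has already confined both $\E_F|P_Ex|^2=\E_{P_E(F)}|x|^2$ and $\E_\mu|x|^2$ to the same interval $[\frac{n-2k}{3},\frac{n+2k}{3}]$, so the oscillation of the face expectations around the global expectation is automatically small. This observation does all the work.

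For the first inequality I would simply note that two numbers lying in an interval of length $\frac{4k}{3}$ cannot differ by more than $\frac{4k}{3}$, giving $|\E_F|P_Ex|^2-\E_\mu|x|^2|\leq\frac{4k}{3}$ at once.

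For the second inequality I would square the pointwise bound to get $(\E_{F_i}|P_Ex|^2-\E_\mu|x|^2)^2\leq\frac{16k^2}{9}$ for every face $F_i$. Since the weights $\frac{|P_E(F_i)|}{|K|}$ form a probability distribution (as recorded immediately after \eqref{CauchyLowerDimension}), the convex combination on the left-hand side is bounded by the same constant $\frac{16k^2}{9}$. Under the hypothesis $k\leq\sqrt n$ this is at most $\frac{16n}{9}$. Combining with the lower bound $\lambda_\mu^2\E_\mu|x|^2\geq\frac{n}{54}$ already obtained inside the proof of Lemma \ref{BoundFirstterm} (valid for $k\leq n/3$, hence a fortiori for $k\leq\sqrt n$ once $n$ exceeds a small absolute constant, while tiny $n$ is absorbed into $C$) yields the desired estimate.

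There is no serious obstacle; the step is essentially an accounting one. The threshold $k\leq\sqrt n$ emerges naturally as the condition balancing the $O(k)$ pointwise oscillation of face expectations against $\sqrt{\lambda_\mu^2\E_\mu|x|^2}\asymp\sqrt n$. If $k$ were allowed to be much larger, the quadratic $k^2$ term would overwhelm the linear denominator $n$ and this argument would break down, which is precisely why the theorem is stated in this range.
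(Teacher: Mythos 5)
Your argument is correct and matches the paper's proof essentially step for step: both derive the pointwise bound $|\E_F|P_Ex|^2-\E_\mu|x|^2|\leq\frac{4k}{3}$ from the interval $[\frac{n-2k}{3},\frac{n+2k}{3}]$ of Lemma~\ref{EstimatesE|x|^2n-kFace}, square it, use that the weights sum to one, and compare against the lower bound $\lambda_\mu^2\E_\mu|x|^2\geq\frac{n}{54}$. No differences worth noting.
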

\begin{proof}
By Lemma \ref{EstimatesE|x|^2n-kFace} we have that for any $(n-k)$ dimensional face $F$
$$
-\frac{4k}{3}\leq \E_{F}|P_Ex|^2-\E_\mu|x|^2\leq \frac{4k}{3}.
$$
Therefore,
$$
\sum_{i=1}^l\frac{|P_E(F_i)|}{|K|}\left(\E_{F_i}|P_Ex|^2-\E_\mu|x|^2\right)^2\leq \frac{16k^2}{9}.
$$
On the other hand, using as above the bound $
\lambda_\mu^2\E_\mu|x|^2\geq\frac{n}{54}
$
we have that if $k\leq\sqrt{n}$
$$
\sum_{i=1}^l\frac{|P_E(F_i)|}{|K|}\left(\E_{F_i}|P_Ex|^2-\E_\mu|x|^2\right)^2\leq C\lambda_\mu^2\E_\mu|x|^2.
$$
\end{proof}

Lemmas \ref{BoundFirstterm} and \ref{BoundSecondTermVariance}, together with formula \eqref{VarTwoTerms} prove Theorem \ref{TheoremVariance}.

\section{The variance conjecture on random $(n-k)$-dimensional projections of the cube}

We will show that we can improve the range of the codimension $k$ for which the variance conjecture remains true on a random subspace $E\in G_{n,n-k}$. In order to do that we will consider, for any $(n-k)$-dimensional face $F$ of $B_\infty^n$, the function $f: G_{n,n-k}\to\R$ given by $f(E)=\E_F|P_E x|^2$ and make use of the concentration of measure theorem, proved by Gromov and Milman,  on $G_{n,n-k}$ (see, for instance, \cite{MS}). $O(n)$ denotes the orthogonal group equipped with the Hilbert-Schmidt distance $\Vert\cdot\Vert_{HS}$ and we represent any $U\in O(n)$ by $U=(u_1,\dots, u_n)$, where $(u_i)$ is an orthonormal basis of $\R^n$. 

\begin{thm}[Concentration of measure]\label{ConcentrationOfMeasure}
Let $f\,:\, G_{n,n-k}\to\R$ be a Lipschitz function  with Lipschitz constant $\sigma$ with respect to the distance
\begin{align*}
d(E_1,E_2)=\inf&\left\{\Vert U-V\Vert_{HS}: U,V\in O(n),\,E_1=\textrm{span}\{u_1,\dots, u_{n-k}\}\right.,\cr
&\left.E_2=\textrm{span}\{v_1,\dots, v_{n-k}\}\right\}.
\end{align*}
Then, for every $\lambda>0$
$$
\mu_{n,n-k}\left\{E\in G_{n,n-k}\,:\,\left|f(E)-\E f(E)|>\lambda\right|\right\}\leq c_1e^{-\frac{c_2\lambda^2n}{\sigma^2}},
$$
where $c_1$ and $c_2$ are positive absolute constants.
\end{thm}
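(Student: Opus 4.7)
My plan is to derive this concentration inequality on $G_{n,n-k}$ by transferring a concentration inequality from the orthogonal group $O(n)$ equipped with its normalized Haar measure and the Hilbert--Schmidt metric. This transfer strategy is natural because the metric $d$ on $G_{n,n-k}$ is defined precisely as the quotient metric induced by $\Vert\cdot\Vert_{HS}$ under the projection $\pi\colon O(n)\to G_{n,n-k}$ that sends $U=(u_1,\dots,u_n)$ to $\mathrm{span}\{u_1,\dots,u_{n-k}\}$.

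The first step is to invoke the (classical) concentration of measure phenomenon on $O(n)$: there exist absolute constants $c_1,c_2>0$ such that for every $1$-Lipschitz function $F\colon O(n)\to\R$ (with respect to $\Vert\cdot\Vert_{HS}$) and every $\lambda>0$,
$$
\Pro_{O(n)}\bigl\{\,U\in O(n):|F(U)-\E F|>\lambda\bigr\}\leq c_1 e^{-c_2\lambda^2 n}.
$$
This is standard and can be proved either via the bi-invariant Riemannian structure on $SO(n)$, whose Ricci curvature is bounded below by a constant multiple of $n$, combined with Gromov's theorem (or the Bakry--\'Emery criterion), or alternatively by an induction/martingale argument following Gromov--Milman. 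The factor $n$ in the exponent comes from the curvature lower bound, which is the reason dimension enters the estimate.

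Next, given a Lipschitz function $f\colon G_{n,n-k}\to\R$ with constant $\sigma$, I would define its lift $\tilde f\colon O(n)\to\R$ by $\tilde f(U)=f(\pi(U))$. The key observation is that $\tilde f$ is Lipschitz with constant $\sigma$ with respect to $\Vert\cdot\Vert_{HS}$: indeed, for $U,V\in O(n)$,
$$
|\tilde f(U)-\tilde f(V)|=|f(\pi(U))-f(\pi(V))|\leq \sigma\, d(\pi(U),\pi(V))\leq \sigma\,\Vert U-V\Vert_{HS},
$$
since the infimum defining $d(\pi(U),\pi(V))$ is bounded above by $\Vert U-V\Vert_{HS}$ itself. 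Moreover, since $\mu_{n,n-k}$ is the pushforward of the Haar measure on $O(n)$ under $\pi$, we have $\E_{\mu_{n,n-k}}f=\E_{O(n)}\tilde f$.

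Applying the $O(n)$ concentration inequality to $\tilde f/\sigma$ (which is $1$-Lipschitz) then yields the stated bound. The only real obstacle is establishing the $O(n)$ concentration with the correct dependence on $n$; everything else is a routine transfer through the Riemannian submersion $\pi$. Since this is a well-known result (see \cite{MS}), in the paper one would simply cite it, which is exactly what the authors do.
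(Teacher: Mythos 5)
Your sketch is correct and reconstructs the standard Gromov--Milman argument found in the reference \cite{MS} that the paper cites for this theorem; the paper itself supplies no proof, treating it as a black-box citation. One small point worth flagging: the Ricci curvature lower bound of order $n$ holds on the connected group $SO(n)$ rather than $O(n)$, but this causes no trouble since $\pi$ restricted to $SO(n)$ already pushes the Haar measure forward onto all of $\mu_{n,n-k}$, so the transfer argument goes through unchanged.
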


In the following lemma we compute the expected value of $f$. Let us point out that what matters to us for our purposes is that, due to the symmetries of $B_\infty^n$, its value does not depend on the face $F$. Nevertheless, we compute its exact value.

\begin{lemma}
Let $F$ be an $(n-k)$-dimensional face of $B_\infty^n$. Then
$$
\int_{G_{n,n-k}}\E_F|P_E x|^2d\mu(E)=\frac{(n-k)(n+2k)}{3n}.
$$
\end{lemma}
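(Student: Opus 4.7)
The plan is to start from the explicit formula for $\E_F|P_E x|^2$ already derived in Lemma \ref{EstimatesE|x|^2n-kFace}, namely
$$
\E_F|P_Ex|^2=\frac{n-k}{3}+\left|P_E\left(\sum_{j=1}^k\varepsilon_je_{i_j}\right)\right|^2-\frac{1}{3}\sum_{j=1}^k|P_E (e_{i_j})|^2,
$$
and integrate term by term with respect to $\mu_{n,n-k}$. Since the first summand is constant in $E$, everything reduces to evaluating integrals of the form $\int_{G_{n,n-k}}|P_E v|^2\,d\mu_{n,n-k}(E)$ for $v\in\R^n$.

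The key auxiliary fact is that for any $v\in\R^n$,
$$
\int_{G_{n,n-k}}|P_E v|^2\,d\mu_{n,n-k}(E)=\frac{n-k}{n}|v|^2.
$$
To justify it, I would pick an orthonormal basis $u_1,\dots,u_{n-k}$ of $E$, write $|P_Ev|^2=\sum_{i=1}^{n-k}\langle u_i,v\rangle^2$, and use the fact that each $u_i$ is marginally uniformly distributed on $S^{n-1}$, so $\E\langle u_i,v\rangle^2=|v|^2/n$; summing $n-k$ such terms gives the claim. This is a standard consequence of the $O(n)$-invariance of $\mu_{n,n-k}$.

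Applying this to $v=e_{i_j}$ yields $\int|P_E(e_{i_j})|^2\,d\mu_{n,n-k}(E)=(n-k)/n$, and applying it to $v=\sum_{j=1}^k\varepsilon_j e_{i_j}$ (whose squared norm equals $k$, because the $e_{i_j}$ are distinct standard basis vectors and $\varepsilon_j^2=1$) yields
$$
\int_{G_{n,n-k}}\left|P_E\Bigl(\sum_{j=1}^k\varepsilon_je_{i_j}\Bigr)\right|^2 d\mu_{n,n-k}(E)=\frac{k(n-k)}{n}.
$$
Substituting back and collecting terms gives
$$
\frac{n-k}{3}+\frac{k(n-k)}{n}-\frac{1}{3}\cdot k\cdot\frac{n-k}{n}=\frac{n-k}{3}\left(1+\frac{2k}{n}\right)=\frac{(n-k)(n+2k)}{3n},
$$
which is the desired value. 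Note in particular that the answer is independent of the choice of face $F$, as anticipated in the preamble to the lemma; no genuine obstacle is expected here, the argument being essentially a routine application of Fubini together with the symmetry of the Haar measure on the Grassmannian.
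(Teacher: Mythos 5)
Your proof is correct, and the arithmetic at the end checks out: $\frac{n-k}{3}+\frac{k(n-k)}{n}-\frac{k(n-k)}{3n}=\frac{n-k}{3}\bigl(1+\frac{2k}{n}\bigr)=\frac{(n-k)(n+2k)}{3n}$. Both you and the paper ultimately rest on the same invariance fact --- that a random $(n-k)$-dimensional projection captures an $\frac{n-k}{n}$ fraction of $|v|^2$ on average --- but you organize the computation differently. You first expand $\E_F|P_Ex|^2$ using the explicit formula from Lemma \ref{EstimatesE|x|^2n-kFace} and then integrate each of the three resulting terms over the Grassmannian, justifying the key identity $\int_{G_{n,n-k}}|P_Ev|^2\,d\mu_{n,n-k}(E)=\frac{n-k}{n}|v|^2$ via the marginal distribution of columns of a Haar-random orthogonal matrix. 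The paper instead applies Fubini directly to $\E_F|P_Ex|^2=(n-k)\E_F\int_{S_E}\langle x,\theta\rangle^2\,d\sigma_E(\theta)$, swaps the integral over $G_{n,n-k}$ with the one over $S_E$ to recover the uniform measure on $S^{n-1}$, obtains $\frac{n-k}{n}\E_F|x|^2$ in one shot, and then finishes with the one-line computation $\E_F|x|^2=k+\frac{n-k}{3}$. The paper's route is marginally shorter because it never needs the decomposition into the three terms; your route is more modular and makes the cancellation among the $a_F$ and $P_E(e_{i_j})$ terms explicit, which is a reasonable thing to want given that the same expansion reappears in the Lipschitz estimate.
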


\begin{proof}
Notice that, by Fubini's theorem and the uniqueness of the Haar measure $\sigma$ on $S^{n-1}$ we have
\begin{eqnarray*}
\int_{G_{n,n-k}}\!\!\!\!\!\E_F|P_E x|^2d\mu_{n,n-k}(E)&=&(n-k)\int_{G_{n,n-k}}\!\!\!\!\!\E_F\int_{S_E}\!\langle P_E x,\theta\rangle^2d\sigma_E(\theta)d\mu_{n,n-k}(E)\cr
&=&(n-k)\int_{G_{n,n-k}}\!\!\E_F\int_{S_E}\langle x,\theta\rangle^2d\sigma_E(\theta)d\mu_{n,n-k}(E)\cr
&=&(n-k)\E_F\int_{G_{n,n-k}}\int_{S_E}\langle x,\theta\rangle^2d\sigma_E(\theta)d\mu_{n,n-k}(E)\cr
&=&(n-k)\E_F\int_{S^{n-1}}\langle x,\theta\rangle^2d\sigma(\theta)\cr
&=&\frac{n-k}{n}\E_F|x|^2\cr
&=&\frac{n-k}{n}\left(k+\E_{B_\infty^{n-k}}|x|^2\right)\cr
&=&\frac{n-k}{n}\left(k+\frac{n-k}{3}\right)\cr
&=&\frac{n-k}{n}\frac{n+2k}{3}.\cr
\end{eqnarray*}
\end{proof}

In the following lemma we estimate the Lipschitz constant of $f$ with respect to the distance defined in Theorem \ref{ConcentrationOfMeasure}. Notice that, as before, its value does not depend on $F$.
\begin{lemma}
Let $F=F_{(i_1,\varepsilon_{1},\dots,i_k,\varepsilon_k)}$ be an $(n-k)$-dimensional face of $B_\infty^n$ and let $f\,:\, G_{n,n-k}\to\R$ be the function $f(E)=\E_F|P_E x|^2$. For any $E_1,E_2\in G_{n,n-k}$ we have
$$
\left|f(E_1)-f(E_2)\right|\leq \frac{8\sqrt{2}k}{3}d(E_1,E_2).
$$
\end{lemma}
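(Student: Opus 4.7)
The plan is to leverage the closed-form formula for $f(E)$ provided by Lemma \ref{EstimatesE|x|^2n-kFace}:
$$f(E)=\frac{n-k}{3}+|P_E a|^2-\frac{1}{3}\sum_{j=1}^k|P_E e_{i_j}|^2,\qquad a:=\sum_{j=1}^k\varepsilon_j e_{i_j},$$
so that $|a|^2=k$. The additive constant $\tfrac{n-k}{3}$ cancels in $f(E_1)-f(E_2)$, reducing the Lipschitz estimate to bounding $\bigl||P_{E_1}y|^2-|P_{E_2}y|^2\bigr|$ for the fixed vectors $y=a$ and $y=e_{i_j}$, $j=1,\dots,k$.

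To estimate $\bigl||P_{E_1}y|^2-|P_{E_2}y|^2\bigr|$, I would fix representatives $U=(u_1,\dots,u_n)$, $V=(v_1,\dots,v_n)\in O(n)$ with $E_1=\textrm{span}\{u_1,\dots,u_{n-k}\}$ and $E_2=\textrm{span}\{v_1,\dots,v_{n-k}\}$, and expand
$$|P_{E_1}y|^2-|P_{E_2}y|^2=\sum_{i=1}^{n-k}\bigl(\langle y,u_i\rangle^2-\langle y,v_i\rangle^2\bigr)=\sum_{i=1}^{n-k}\langle y,u_i-v_i\rangle\langle y,u_i+v_i\rangle.$$
Arranging the first $n-k$ columns of $U$ and $V$ into matrices $A,B\in\R^{n\times(n-k)}$, Cauchy--Schwarz combined with the operator-norm bounds $\bigl(\sum_i\langle y,u_i-v_i\rangle^2\bigr)^{1/2}=|(A-B)^{T}y|\le\|A-B\|_{HS}|y|\le\|U-V\|_{HS}|y|$ and $\bigl(\sum_i\langle y,u_i+v_i\rangle^2\bigr)^{1/2}\le(\|A\|_{op}+\|B\|_{op})|y|=2|y|$ yields
$$\bigl||P_{E_1}y|^2-|P_{E_2}y|^2\bigr|\le 2|y|^2\,\|U-V\|_{HS}.$$

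Applying this with $y=a$ contributes $2k\,\|U-V\|_{HS}$, while the $k$ choices $y=e_{i_j}$ contribute $\tfrac{2k}{3}\|U-V\|_{HS}$ after the factor $1/3$. Summing by the triangle inequality gives $|f(E_1)-f(E_2)|\le\tfrac{8k}{3}\|U-V\|_{HS}$, and taking the infimum over admissible pairs $(U,V)$ yields the Lipschitz bound against $d(E_1,E_2)$. The constant $8\sqrt 2/3$ in the statement is presumably obtained from a slightly looser variant of the Cauchy--Schwarz step (for instance, estimating $\|A+B\|_{HS}$ rather than $\|A+B\|_{op}$), but any such variant is amply sufficient for the subsequent concentration argument. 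No step is really an obstacle: the calculation is routine once the formula of Lemma \ref{EstimatesE|x|^2n-kFace} is available, and the only conceptual point is that the definition of $d$ allows replacing any particular pair $(U,V)$ by the infimum at the end.
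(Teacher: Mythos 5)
Your proof is correct, and it follows essentially the same route as the paper: start from the closed-form formula of Lemma \ref{EstimatesE|x|^2n-kFace}, cancel the additive constant, and reduce to bounding $\bigl||P_{E_1}y|^2-|P_{E_2}y|^2\bigr|$ for $y=a$ and $y=e_{i_j}$. The one place you diverge is in the final estimate: the paper factors $\bigl||P_{E_1}y|^2-|P_{E_2}y|^2\bigr|$ as $\bigl(|P_{E_1}y|+|P_{E_2}y|\bigr)\bigl||P_{E_1}y|-|P_{E_2}y|\bigr|\leq 2|y|\cdot\|P_{E_1}-P_{E_2}\|\,|y|$ and then separately proves $\|P_{E_1}-P_{E_2}\|\leq\|P_{E_1}-P_{E_2}\|_{HS}\leq\sqrt{2}\,\|U-V\|_{HS}$ via the identity $\|P_{E_1}-P_{E_2}\|_{HS}^2=2(n-k)-2\sum_{i,j}\langle u_i,v_j\rangle^2$, which is the source of the $\sqrt{2}$ in the stated constant. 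You instead expand coordinate-wise as $\sum_i\langle y,u_i-v_i\rangle\langle y,u_i+v_i\rangle$ and apply Cauchy--Schwarz directly, obtaining $2|y|^2\|U-V\|_{HS}$ and hence the slightly sharper constant $\tfrac{8k}{3}$ without the detour through $\|P_{E_1}-P_{E_2}\|$. Both are routine once the formula is in hand; your version is marginally cleaner and shows the $\sqrt 2$ in the paper is an artifact of the intermediate bound, not intrinsic.
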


\begin{proof}
Let $E_1,E_2\in G_{n,n-k}$. By Lemma \ref{EstimatesE|x|^2n-kFace} we have
\begin{eqnarray*}
&&\left|f(E_1)-f(E_2)\right|=\left|\E_F|P_{E_1}x|^2-\E_F|P_{E_2} x|^2\right|\cr
&=&\left|\left|P_{E_1}\!\left(\sum_{j=1}^k\varepsilon_j e_{i_j}\!\right)\right|^2\!-\left|P_{E_2}\!\left(\sum_{j=1}^k\varepsilon_j e_{i_j}\!\right)\right|^2\!-\frac{1}{3}\left(\!\sum_{j=1}^k|P_{E_1}(e_{i_j})|^2-|P_{E_2}(e_{i_j})|^2\!\right)\right|\cr
&\leq&\left|\left|P_{E_1}\left(\sum_{j=1}^k\varepsilon_j e_{i_j}\right)\right|^2-\left|P_{E_2}\left(\sum_{j=1}^k\varepsilon_j e_{i_j}\right)\right|^2\right|\cr
&+&\frac{1}{3}\left|\left(\!\sum_{j=1}^k|P_{E_1}(e_{i_j})|^2-|P_{E_2}(e_{i_j})|^2\right)\right|\cr
&=&\left|\left|P_{E_1}\!\left(\!\sum_{j=1}^k\varepsilon_j e_{i_j}\!\right)\right|\!+\!\left|P_{E_2}\!\left(\!\sum_{j=1}^k\varepsilon_j e_{i_j}\!\right)\right|\right|\left|\left|P_{E_1}\!\left(\sum_{j=1}^k\varepsilon_j e_{i_j}\!\right)\right|\!-\!\left|P_{E_2}\!\left(\!\sum_{j=1}^k\varepsilon_j e_{i_j}\!\right)\right|\right|\cr
&+&\frac{1}{3}\left(\sum_{j=1}^k\left||P_{E_1}(e_{i_j})|+|P_{E_2}(e_{i_j})|\right|\left||P_{E_1}(e_{i_j})|-|P_{E_2}(e_{i_j})|\right|\right)\cr
&\leq&2\sqrt{k}\left|\left(P_{E_1}-P_{E_2}\right)\left(\sum_{j=1}^k\varepsilon_j e_{i_j}\right)\right|\cr
&+&\frac{2}{3}\sum_{j=1}^k|\left(P_{E_1}-P_{E_2}\right)(e_{i_j})|\cr
&\leq&2k\Vert P_{E_1}-P_{E_2}\Vert +\frac{2k}{3}\Vert P_{E_1}-P_{E_2}\Vert\cr
&=&\frac{8k}{3}\Vert P_{E_1}-P_{E_2}\Vert.\cr
\end{eqnarray*}
Notice that for any $U,V\in O(n)$ such that $E_1=\textrm{span}\{u_1,\dots, u_{n-k}\}$ and $E_2=\textrm{span}\{v_1,\dots, v_{n-k}\}$ we can write $P_{E_1}=\sum_{j=1}^{n-k} u_j\otimes u_j$ and $P_{E_2}=\sum_{j=1}^{n-k} v_j\otimes v_j$, and then, for any such $U,V$
\begin{eqnarray*}
\Vert P_{E_1}-P_{E_2}\Vert^2&\leq& \Vert P_{E_1}-P_{E_2}\Vert_{HS}^2=2(n-k)-2\sum_{i,j=1}^{n-k}\langle u_i,v_j\rangle^2\cr
&\leq&2\sum_{j=1}^{n-k}(1-\langle u_j,v_j\rangle^2)\leq2\sum_{j=1}^{n-k}|u_j-v_j|^2\leq2\sum_{j=1}^{n}|u_j-v_j|^2\cr
&=&2\Vert U-V\Vert_{HS}^2,
\end{eqnarray*}
since $1-\langle u_j, v_j\rangle^2\leq 2(1-\langle u_j, v_j\rangle)=|u_j-v_j|^2$. Consequently $\Vert P_{E_1}-P_{E_2}\Vert\leq\sqrt2d(E_1,E_2)$ and we obtain the result.
\end{proof}

\begin{lemma}\label{HighProbabilityEveryFace}
Let $1\leq k\leq\frac{n^\frac{2}{3}}{(\log n)^\frac{1}{3}}$. There exist positive absolute constants $C, c_1,c_2$ such that the set
$$
\left\{E\in G_{n,n-k}\,:\,\left|\E_F|P_E x|^2-\frac{(n-k)(n+2k)}{3n}\right|>C\sqrt n\,,\,\textrm{for some }F\right\}
$$
has measure $\mu_{n,n-k}$ smaller than $c_1e^{-c_2n^\frac{2}{3}(\log n)^\frac{2}{3}}$.
\end{lemma}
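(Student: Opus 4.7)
The plan is to apply the Gromov--Milman concentration inequality (Theorem \ref{ConcentrationOfMeasure}) to each function $f_F(E):=\E_F|P_E x|^2$ one face at a time, and then combine the individual tail estimates by a union bound over all $(n-k)$-dimensional faces $F$ of $B_\infty^n$.

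For a fixed face $F$, the two preceding lemmas provide the two ingredients I need: the mean value $\int_{G_{n,n-k}} f_F\, d\mu_{n,n-k}=\frac{(n-k)(n+2k)}{3n}$ does not depend on $F$ (which is what makes a simultaneous statement over all $F$ meaningful), and the Lipschitz constant of $f_F$ with respect to $d$ is at most $\sigma:=\tfrac{8\sqrt{2}k}{3}$. Feeding $\lambda=C\sqrt n$ into Theorem \ref{ConcentrationOfMeasure} yields, for each single face,
$$
\mu_{n,n-k}\!\left\{E\in G_{n,n-k}\,:\,\Bigl|f_F(E)-\tfrac{(n-k)(n+2k)}{3n}\Bigr|>C\sqrt n\right\}\leq c_1\, e^{-c_3 C^2 n^2/k^2},
$$
for an absolute constant $c_3>0$ that absorbs the factor $\sigma^{-2}\sim k^{-2}$ coming from the theorem.

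Next I would bound the total number of faces: since $B_\infty^n$ has $N=\binom{n}{k}2^k$ faces of dimension $n-k$, we have $\log N\leq k\log(2en/k)\leq k\log(2en)$. A union bound then shows that the measure of the bad set in the statement is at most
$$
c_1\,N\,e^{-c_3 C^2 n^2/k^2}\leq c_1\,\exp\!\bigl(k\log(2en)-c_3 C^2 n^2/k^2\bigr).
$$
The hypothesis $k\leq n^{2/3}/(\log n)^{1/3}$ gives simultaneously $n^2/k^2\geq n^{2/3}(\log n)^{2/3}$ and $k\log(2en)\leq C'\,n^{2/3}(\log n)^{2/3}$ for some absolute constant $C'$. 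Hence, choosing $C$ large enough that $c_3 C^2\geq C'+c_2$ for a convenient $c_2>0$, the exponent is at most $-c_2\,n^{2/3}(\log n)^{2/3}$, which is precisely the claimed tail bound.

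The only delicate point of the argument is the balance between the Gaussian-type concentration rate $e^{-c\,n^2/k^2}$ (which degrades like $1/k^2$ because $\sigma\sim k$) and the combinatorial cost $\log N\sim k\log n$ of the union bound. The threshold $k\lesssim n^{2/3}/(\log n)^{1/3}$ is precisely the regime where these two quantities have the same order of magnitude $n^{2/3}(\log n)^{2/3}$ at the endpoint, which explains why that specific exponent appears in the final probability estimate; beyond this arithmetic, there is no real obstacle.
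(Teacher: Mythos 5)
Your proof is correct and follows essentially the same route as the paper: apply Gromov--Milman concentration to each face function $f_F$ with $\lambda=C\sqrt n$ (using the Lipschitz constant $\sigma\sim k$ and the $F$-independent mean from the two preceding lemmas), then union bound over the $2^k\binom{n}{k}$ faces, and observe that the restriction $k\lesssim n^{2/3}/(\log n)^{1/3}$ is precisely what makes the entropy term $k\log n$ subdominant to the concentration rate $n^2/k^2$ once $C$ is chosen large. Your closing remark about the two competing quantities having the same order at the endpoint is the same heuristic underlying the paper's choice of range.
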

\begin{proof}
Let $F$ be a fixed $(n-k)$-dimensional face of $B_\infty^n$. Then, taking $\lambda=C\sqrt{n}$ we obtain, using Theorem \ref{ConcentrationOfMeasure} that
$$
\mu_{n,n-k}\left\{E\in G_{n,n-k}\,:\,\left|\E_F|P_E x|^2-\frac{(n-k)(n+2k)}{3n}\right|> C\sqrt n\right\}\leq c_1e^{-\frac{c_2C^2 n^2}{k^2}}.
$$
Since the number of $(n-k)$-dimensional faces of $B_\infty^n$ equals $2^k\binom{n}{k}$, using the union bound we have that for any $C>0$
\begin{eqnarray*}
&&\mu_{n,n-k}\left\{E\in G_{n,n-k}\,:\,\left|\E_F|P_E x|^2-\frac{(n-k)(n+2k)}{3n}\right|>C\sqrt n\,,\,\textrm{for some }F\right\}\cr
&\leq& c_1e^{-\frac{c_2C^2 n^2}{k^2}+k\log 2+k\log\frac{en}{k}}\leq c_1e^{-\frac{c_2C^2 n^2}{k^2}+c_3k\log n}\cr
&=&c_1e^{-\frac{c_2C^2 n^2-c_3k^3\log n}{k^2}}\leq  c_1e^{-c_4n^\frac{2}{3}(\log n)^\frac{2}{3}}\cr
\end{eqnarray*}
if we choose $C$ a constant big enough.
\end{proof}

As a consequence, we obtain the following lemma, which gives an estimate of the right order for most subspaces, for the second term in \eqref{VarTwoTerms}.

\begin{lemma}\label{BoundSecondTermVariance2}
There exists an absolute constant $C$ such that for any $1\leq k\leq\frac{n^\frac{2}{3}}{(\log n)^\frac{1}{3}}$, if $\mu$ denotes the uniform probability measure on $K=P_E(B_\infty^n)$, the measure $\mu_{n,n-k}$ of the subspaces $E\in G_{n,n-k}$ for which
$$
\left|\E_{P_E(F)}|x|^2-\E_\mu|x|^2\right|\leq C\sqrt{n}
$$
for every $(n-k)$-dimensional face $F$ of $B_\infty^n$ is greater than $1-c_1e^{-Cn^\frac{2}{3}(\log n)^\frac{2}{3}}$.
\end{lemma}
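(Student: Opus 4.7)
The plan is to promote Lemma \ref{HighProbabilityEveryFace}, which controls every face expectation $\E_F|P_E x|^2$ around the deterministic centre $m:=\frac{(n-k)(n+2k)}{3n}$, to the required uniform control around the global average $\E_\mu|x|^2$. The mechanism will be two applications of the triangle inequality combined with the Cauchy-type decomposition \eqref{CauchyLowerDimension}, and no further probabilistic input is needed.

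First I would restrict attention to the good event of Lemma \ref{HighProbabilityEveryFace}, which has $\mu_{n,n-k}$-measure at least $1-c_1 e^{-c_2 n^{2/3}(\log n)^{2/3}}$, and on which $|\E_F|P_E x|^2-m|\leq C\sqrt n$ simultaneously for every $(n-k)$-dimensional face $F$ of $B_\infty^n$. By the identity $\E_{P_E(F)}|x|^2=\E_F|P_E x|^2$ from Lemma \ref{EstimatesE|x|^2n-kFace}, the same inequality reads $|\E_{P_E(F)}|x|^2-m|\leq C\sqrt n$ for every face $F$. Next, since \eqref{CauchyLowerDimension} applied to $f(x)=|x|^2$ expresses $\E_\mu|x|^2$ as a convex combination of the quantities $\E_{P_E(F_i)}|x|^2$ (with weights $|P_E(F_i)|/|K|$ that are nonnegative and sum to $1$), the same $C\sqrt n$ bound transfers to $|\E_\mu|x|^2-m|$, and a final triangle inequality yields $|\E_{P_E(F)}|x|^2-\E_\mu|x|^2|\leq 2C\sqrt n$ uniformly in $F$ on the same event. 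Absorbing the factor $2$ into $C$ gives the claim.

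There is no genuine obstacle here: all the substantive work was already done in Lemma \ref{HighProbabilityEveryFace}, whose proof combined the Gromov--Milman concentration estimate with a union bound over the $2^k\binom{n}{k}$ faces of $B_\infty^n$ and produced the restriction $k\leq n^{2/3}(\log n)^{-1/3}$. What remains is only the elementary observation that averaging numbers clustered around a common point keeps the average close to that point, which lets us exchange the auxiliary centre $m$ for the genuine centre $\E_\mu|x|^2$ at the cost of a factor of $2$. This lemma then feeds into the second summand of \eqref{VarTwoTerms}, since squaring gives deviations of order $n$ and the lower bound $\lambda_\mu^2\E_\mu|x|^2\geq n/54$ from Lemma \ref{EstimatesE|x|^2n-kFace} (valid throughout our range of $k$) converts this into the correct order for the variance conjecture.
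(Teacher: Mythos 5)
Your proposal is correct and follows essentially the same route as the paper: restrict to the good event of Lemma \ref{HighProbabilityEveryFace}, use the decomposition \eqref{CauchyLowerDimension} to recognize $\E_\mu|x|^2$ as a convex combination of the face expectations, and conclude by a triangle-type argument giving a factor $2$. The only cosmetic difference is that you pass through the auxiliary centre $m$ and apply the triangle inequality directly, whereas the paper compares $\E_{P_E(F)}|x|^2$ to the min and max over the faces $F_i$ in the decomposition; both yield the same $2C\sqrt n$ bound.
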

\begin{proof}
By Lemma \ref{HighProbabilityEveryFace} there exists a set of subspaces $\mu_{n,n-k}$ measure greater than $1-c_1e^{-Cn^\frac{2}{3}(\log n)^\frac{2}{3}}$ such that for every $(n-k)$-dimensional face $F$ of $B_\infty^n$,
$$
\left|\E_{P_E(F)}|x|^2-\frac{(n-k)(n+2k)}{3n}\right|=\left|\E_F|P_E x|^2-\frac{(n-k)(n+2k)}{3n}\right|\leq\alpha\sqrt n
$$
and then for every $F_1, F_2$, $(n-k)$-dimensional faces
$$
\left|\E_{P_E(F_1)}|x|^2-\E_{P_E(F_2)}|x|^2\right|\leq 2\alpha\sqrt{n}.
$$
Consequently, since $\displaystyle{\E_\mu|x|^2=\sum_{i=1}^l\frac{|P_E(F_i)|}{|K|}\E_{P_E{F_i}}|x|^2}$, we have that for every  $E$ in this set and every face $F$
$$
\E_{P_E(F)}|x|^2-\E_\mu|x|^2\leq\E_{P_E(F)}|x|^2-\min_{i=1,\dots,l}\E_{P_E(F_i)}|x|^2\leq 2\alpha\sqrt{n}
$$
and
$$
\E_{F}|P_Ex|^2-\E_\mu|x|^2\geq\E_{F}|P_Ex|^2-\max_{i=1,\dots,l}\E_{F_i}|x|^2\geq -2\alpha\sqrt{n}.
$$
\end{proof}
Now we are able to prove Theorem \ref{TheoremVarianceRandom}.

\begin{proof}[Proof of Theorem \ref{TheoremVarianceRandom}]
If $1\leq k\leq\frac{n^\frac{2}{3}}{(\log n)^\frac{1}{3}}$ by Lemma \ref{EstimatesE|x|^2n-kFace} we have  $\lambda_\mu^2\E_\mu|x|^2\geq Cn$. By equation  \eqref{VarTwoTerms}, if $\{F_i\}_{i=1}^l$ are the $(n-k)$-dimensional faces of $B_\infty^n$ described in $(1)$ we have,
$$
\textrm{Var}_\mu|x|^2\leq\max_{i=1\dots,l}\textrm{Var}_{P_E(F_i)}|x|^2+\max_{i=1,\dots,l}\left(\E_{P_E(F_i)}|x|^2-\E_\mu|x|^2\right)^2.
$$
By Lemma \ref{BoundFirstterm} the first maximum is bounded from above by $Cn$ and by Lemma \ref{BoundSecondTermVariance2} there exists a set of $(n-k)$-dimensional subspaces with measure larger than  $1-c_1e^{-Cn^\frac{2}{3}(\log n)^\frac{2}{3}}$ such that the second maximum is bounded from above by $Cn$, which proves Theorem \ref{TheoremVarianceRandom}.
\end{proof}

\end{document}